\begin{document}
\newtheorem{theorem}{Theorem}[section]
\newtheorem{proposition}[theorem]{Proposition}
\newtheorem{algorithm}[theorem]{Algorithm}
\newtheorem{axiom}[theorem]{Axiom}
\newtheorem{case}[theorem]{Case}
\newtheorem{claim}[theorem]{Claim}
\newtheorem{conclusion}[theorem]{Conclusion}
\newtheorem{condition}[theorem]{Condition}
\newtheorem{conjecture}[theorem]{Conjecture}
\newtheorem{corollary}[theorem]{Corollary}
\newtheorem{criterion}[theorem]{Criterion}
\newtheorem{lemma}[theorem]{Lemma}
\newtheorem{counterexample}[theorem]{Counterexample}
\theoremstyle{definition}
\newtheorem{definition}[theorem]{Definition}
\newtheorem{example}[theorem]{Example}
\newtheorem{exercise}[theorem]{Exercise}
\newtheorem{notation}[theorem]{Notation}
\newtheorem{note}[theorem]{Note}
\newtheorem{problem}[theorem]{Problem}
\newtheorem{remark}{Remark}
\newtheorem{solution}[theorem]{Solution}
\newtheorem{summary}[theorem]{Summary}
\newtheorem{construction}[theorem]{Construction}
\numberwithin{equation}{section}
\makeatletter
\newcommand{\leqnomode}{\tagsleft@true}
\newcommand{\reqnomode}{\tagsleft@false}

 \title[On the radical and torsion theory in the category of $S$-acts ] {On radical and torsion theory in the category of $S$-acts}
\author[ M. Haddadi and S.M. N. Sheykholislami]{ M. Haddadi and S.M. N. Sheykholislami}
\address{\rm{Department of Mathematics, Statistics and Computer Sciences, Semnan University, Semnan, Iran.}}
\email{ m.haddadi@semnan.ac.ir, haddadi$\_$1360@yahoo.com}
%\dedicatory{To the memory of Stephen Bloom.}
\subjclass[2000]{17A65, 20M30.}
\keywords{Hoenke radical, Kourosh-Amitsur radical, torsion Theory, \mbox{$S$-act}.}

\begin{abstract}
In abelian categories like the category of $R$-modules and even in the category {\bf S-Act}$_{0}$ of S-acts
with a unique zero, idempotent radicals and torsion theories are equivalent, and the $\tau$-torsion and $\tau$-torsion free classes of a torsion theory $\tau$ are closed under coproducts. These are not
necessarily true in the category {\bf S-Act} of $S$-acts. In this paper, we prove that torsion theories are equivalent with the Kurosh-Amitsur radicals. We, also, show that the class of Kurosh-Amitsur radicals is a reflective subcategory of Hoehnke radicals, as a poset.
\end{abstract}

\maketitle

\section{Introduction}
The importance of Radical and Torsion theory in many areas of mathematics is well known. These topics are intensively studied throughout the years and developed customarily in abelian groups, semigroup, modules, and even abelian categories (see \cite{Clifford, Dickson, Hoehnke 1965-2, Schelter}). Here we are going to study on these topics in the category {\bf S-Act} of $S$-acts.

In this paper after recapitulating the rudiments of the Kurosh-Amitsur radical and torsion theory in the category {\bf S-Act} of $S$-acts, we demonstrate the well known correspondence between radicals and torsion theory in this category. Indeed, we show that every Kurosh-Amitsur radical is given by a pair of the appropriately chosen classes of $S$-acts. We also show that the class of Kurosh-Amitsur radicals
as a poset is a reflective subcategory of the class of  Hoehnke radicals, see
Section \ref{T. sub H.}.

Now let us recall some necessary notions needed throughout the paper.

 An {\em $S$-act} $A$ over a monoid $S$ is a set $A$ together with an
action $(s,a)\mapsto sa$, for $a\in A$, $s\in S$, subject
to the rules $s(ta) = (st)a$ and $1a=a$, where $1$ is the
identity element of the monoid $S$ and $a\in A$ and $s, t\in
S$. We will work in the category of all $S$-acts and all
homomorphisms $f: A\to B$, subject to $f(as) = f(a)s$, for all $a\in A$
and $s\in S$. An element $z$ of an $S$-act $A$ is said to be a \textit{zero} if $sz = z$, for
all $s \in S$. Also, we say that an $S$-act $A$ is \textit{trivial} if $|A|\leq
1$.

 An equivalence relation $\rho$ on an $S$-act $A$ is called a {\em
congruence} on $A$ if $a\rho a'$ implies $(as)\rho(a's)$, for
all $s\in S$. We denote the set of all congruences on $A$ by
$\rm{Con}(A)$, which forms a lattice, see \cite{Burris}. In the lattice $\rm{Con}(A)$
there is the smallest congruence,
the diagonal relation $\Delta_A = \{(a, a) | a \in A\}$, and the
largest congruence, the total relation $\nabla_A = \{(a, b) |\ a, b \in
A\}$. Every congruence $\rho\in \rm{Con}(A)$ determines a
partition of $A$ into $\rho$-cosets and a system $\Sigma_\rho$ of
those $\rho$-cosets each of which is a non-trivial subact of $A$. Of course, $\Sigma_\rho$
may be empty. Throughout this paper we use the general Rees
congruence introduced in \cite{Wiegandt}; that is, in a
{\em Rees congruence} the cosets are either  subacts or consist of
one element. Also every system $\Sigma$ of disjoint non-trivial
subacts of an $S$-act $A$ determines a Rees congruence
$\rho_\Sigma$ given by
\[
(a, b)\in \rho_\Sigma \Longleftrightarrow\begin{cases}
a, b \in B &\ \text{for some} \ B\in \Sigma\\
a = b&\ \text{otherwise}.
\end{cases}
\]

We call $\rho_{\Sigma}$ ($\rho_{B}$ if $\Sigma=\{B\}$) a \textit{generated Rees congruence by $\Sigma$} on $A$ and $A/\rho_\Sigma$ a {\em Rees factor} of $A$ over $\rho_{\Sigma}$ (or for short, the Rees factor). Clearly, there is
a one-to-one correspondence between Rees congruences and systems
of disjoint non-trivial subacts. Moreover, the set of all systems
of disjoint subacts of an $S$-act $A$ forms a lattice isomorphic
to the sublattice of all Rees congruences in $\rm{Con}(A)$. Every congruence $\chi\in
\rm{Con}(A)$ determines a Rees congruence $\rho_\Sigma$ via
$\Sigma_\chi$, with $\rho_\Sigma\leq\chi$.

 A congruence $\chi_B$ of a subact $B$ of an $S$-act $A$ can be extend to a congruence of the $S$-act
$A$. There is always the smallest extension $\chi_A$ given by
\[
(a, b)\in\chi_A\Longleftrightarrow\begin{cases}
(a, b) \in\chi_B\\
a = b \ \text{otherwise}.
\end{cases}
\]
Therefore we may consider each congruence $\chi_B\in\rm{Con}(B)$ as a congruence of
$\rm{Con}(A)$ by identifying $\chi_B$ and $\chi_A$. In particular, $\nabla_B$ can be viewed as the Rees
congruence $\rho_B \in\rm{ Con}(A)$ determined by the system $\Sigma_{\rho_B} = \{B\}$.

 Whenever talking  about a subclass $\mathbb{C}$ of $S$-acts, we assume that $\mathbb{C}$ is
closed under taking isomorphic copies and $\mathbb{C}$
contains of all trivial subacts.

 Given a subclass $\mathbb{C}$ of $S$-acts, a system $\Sigma$ of disjoint non-trivial subacts of an $S$-act $A$ is called a $\mathbb{C}$-$system$ if $B \in \mathbb{C}$, for each $B \in\Sigma$.

In the sequel of this paper we frequently use the closedness of a subclass $\mathbb{C}$ of $S$-acts under a special property such as  closedness under  homomorphic image, closedness under congruence extensions, closedness under Rees extensions, closedness under subact, closedness under product, and inductive property which have defined in  \cite{Wiegandt}.

\medskip

Although  the radical and the torsion theory for $S$-acts were introduced and investigated by R. Wiegandt \cite{Wiegandt}, but  it seems necessary to define the radical in a more general manner. Here we follow the category theoretical view of radical \cite{Tholen} and give the following definition of Hoehnke radical in {\bf S-Act} which may also is called a normal Hoehnke radical.    

 \begin{definition}\label{Hoehnke}
$(1)$ A \textit{normal Hoehnke radical} (or simply a \textit{Hoehnke radical}) is an assignment $r: A \to r(A)$,
assigning to each $S$-act $A$ a congruence $r(A) \in \rm{Con}(A)$ in such a way that 

\rm{(i)} $r$ is functorial, or more precisely, every homomorphism $f: A\to B$ induces the homomorphism $r(f):r(A)\to r(B)$. Meaning that   $(f(a),f(a'))\in r(B)$ if $(a,a')~\in ~r(A)$, for every homomorphism $f:A\rightarrow B$. Note that $r(A)$ and $r(B)$ are, respectively, subacts of $A\times A$ and $B\times B$, since $r(A)\in \rm{Con}(A), r(B)\in \rm{Con}(B)$, and

 \rm{(ii)} $r(A/r(A)) = \Delta_{A/r(A)}$.
\medskip

$(2)$ A  Hoehnke radical $r$ is said to be \emph{hereditary}, if $ r(B) = r(A)\wedge \nabla_{B}$
for every S-acts $A$ and $B \leq A$.
\medskip
\end{definition}
 With every  Hoehnke radical $r$ one can associate two classes of
$S$-acts, namely {\it radical class} $\mathbb{R}_r$ and {\it
semisimple class} $\mathbb{S}_r$, as follows:
\begin{align*}
\mathbb{R}_{r} = \{A \ |\ r(A) = \nabla_{A}\}\\
\mathbb{S}_{r} = \{A \ |\ r(A) = \Delta_{A}\}.
\end{align*}

 It is worth noting that $\mathbb{S}_{r}$ is closed under taking
subacts and products. Indeed, Since every (normal) Hoehnke radical is a Hoehnke radical in the sense of \cite{Wiegandt}, $\mathbb{S}_{r}$ is closed under products, also for every subact $B$ of a semisimple $S$-act $A$, by Definition \ref{Hoehnke}, the embedding map from $B$ to $A$ implies $r(B)\subseteq r(A)=\Delta_{A}$. Hence $r(B)=\Delta_B$.

Every subclass $\mathbb{S}$ of $S$-acts which is closed under taking subacts and products, determines a
 Hoehnke radical $r_{\mathbb{S}}$ defined by:
\begin{align*}
r_{\mathbb{S}}(A)=\bigwedge (\chi \in \rm{Con}(A) \ |\ A/ \chi \in \mathbb{S}).
\end{align*}
Moreover, $\mathbb{S} = \mathbb{S}_{r}$ if and only if $r = r_{\mathbb{S}}$ .

\begin{definition}  A  Hoehnke radical $r$ of $S$-acts is called a {\em Kurosh-Amitsur}
radical, if

 \rm{(i)} r(A) is a Rees congruence, for all $S$-acts $A$,

\rm{(ii)} for every $B\in \Sigma_{r(A)}$, $r(B)=\nabla_B$,
%for every $r(A)$-coset $B$, so called $r$-system, (i.e. $B\in \Sigma_r(A)$) $r(B)=\nabla_B$,

 \rm{(iii)} if $\Sigma$ is an $\mathbb{R}_{r}$-$system$ of disjoint non-trivial subacts of an S-act A, then
$\Sigma\leq \Sigma_{r(A)} $, that is, for every $B\in \Sigma$, there exists $C\in \Sigma_{r(A)}$ with $B\leq C$.
\end{definition}
\medskip

We recall, from \cite{Wiegandt}, that a subclass
$\mathbb{S}$ of $S$-acts is a semisimple class of a Kurosh-Amitsur radical $r$
if and only if
\begin{enumerate}
\item $\mathbb{S}$ is closed under taking subacts,
\item $\mathbb{S}$ is closed under taking products,
\item $\mathbb{S}$ is closed under taking congruence extensions.
\end{enumerate}

 Also a subclass $\mathbb{R}$ of $S$-acts is a radical class of
a radical $r$ if and only if
\begin{enumerate}
\item $\mathbb{R}$ is homomorphically closed,
\item $\mathbb{R}$ has the inductive property,
\item $\mathbb{R}$ is closed under Rees extensions.
\end{enumerate}
Furthermore,
\begin{align*}
r(A) = \vee\{\rho \in \rm{Con}(A) \ | \ \rho \ \text{is a Rees congruence and} \ \Sigma_\rho \subseteq\mathbb{R}\}.
\end{align*}

 The readers may consult \cite{Adamek, Burris, Kilp} for general facts about category theory and universal algebra used in this paper. Here we also follow the notations and terminologies used there.

 \medskip
\section{Torsion theories as a reflective subcategory of  Hoehnke radicals}\label{T. sub H.}
 As is well known, in the category of $R$-modules there is a bijective correspondence between torsion theories and idempotent radicals,  radicals subject to the rule  $r\circ r=r$, (see e.g. \cite{Stenstrom}). In this section first, we define the torsion theory in the category of $S$-acts and we show that 
 there exists a bijective correspondence between torsion theories and Kurosh-Amitsur radicals and
  then, we prove that the class of Kurosh-Amitsur radicals over $A$ is a reflective full subcategory of the class of  Hoehnke radicals over $A$, for every $S$-act $A$.

\begin{definition}\label{torsion theory}
A pair $\tau=(\mathbb{T}, \mathbb{F})$ of subclasses of $S$-acts is called a \textit{torsion theory} if it satisfies the following conditions:
\begin{enumerate}
\item ${\rm Hom}(A, B)$ is empty or when $B$ has zeros, ${\rm Hom}(A, B)$ consists of the zero homomorphisms, for every $A\in \mathbb{T}$ and $B\in \mathbb{F}$.
\item If, for every $B\in \mathbb{F}$, ${\rm Hom}(A, B)$ is empty or when $B$ has zeros, ${\rm Hom}(A, B)$ consists of the zero homomorphisms then $A\in \mathbb{T}$.
\item If, for every $A\in\mathbb{T}$, ${\rm Hom}(A, B)$ is empty or when $B$ has zeros, ${\rm Hom}(A, B)$ consists of the zero homomorphisms then $B\in \mathbb{F}$.
\end{enumerate}
\end{definition}

Let us call the class $\mathbb{T}$ as the \textit{torsion class} of $\tau$ and its members be called $\tau$-\textit{torsion} acts, whereas the class $\mathbb{F}$ is called the \textit{torsion-free class} of $\tau$ and its members are called $\tau$-\textit{torsion-free} acts, as it is used in module theory.

Although the assertion of the following lemma and Theorem 2.2 \cite{Wiegandt} are same but they express different  statements, since our  definition of a pair of radical and semisimple class $(\mathbb{R},\mathbb{S})$ is more general, see the following lemma.

\begin{lemma}\label{pair kurosh-amitsur}
A pair $(\mathbb{R}, \mathbb{S})$ of subclasses of $S$-acts is the radical class and the semisimple class of a Kurosh-Amitsur radical $r$ if and only if
\begin{enumerate}
\item $\mathbb{R}\cap \mathbb{S}$ consists of trivial $S$-acts,
\item $\mathbb{R}$ is homomorphically closed,
\item $\mathbb{S}$ is closed under taking subacts,
\item every $S$-act $A$ has an $\mathbb{R}$-$system$  $\Sigma$ whose Rees factor, $A/\rho_{_{\Sigma}}$, belongs to $\mathbb{S}$.
\end{enumerate}
\end{lemma}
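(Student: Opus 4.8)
The plan is to prove both directions of the biconditional, treating the forward direction (necessity of the four conditions) as essentially a verification, and the reverse direction (sufficiency) as the real content where a Kurosh-Amitsur radical must be constructed from the pair $(\mathbb{R}, \mathbb{S})$.

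First I would establish necessity. Suppose $(\mathbb{R}, \mathbb{S}) = (\mathbb{R}_r, \mathbb{S}_r)$ for a Kurosh-Amitsur radical $r$. Condition (1) follows because an $S$-act $A$ with $r(A) = \nabla_A = \Delta_A$ forces $|A| \leq 1$. Condition (2) is the homomorphic closure of a radical class, already recorded in the excerpt among the defining properties of radical classes. Condition (3) is the closure of semisimple classes under subacts, also already noted in the excerpt (via the embedding $B \hookrightarrow A$ giving $r(B) \subseteq r(A) = \Delta_A$). Condition (4) is the crux of necessity: for each $A$ I would take $\Sigma := \Sigma_{r(A)}$, which is an $\mathbb{R}$-system by axiom (ii) of the Kurosh-Amitsur definition (each $B \in \Sigma_{r(A)}$ satisfies $r(B) = \nabla_B$, so $B \in \mathbb{R}_r$), and then use $r(A/r(A)) = \Delta_{A/r(A)}$ together with the fact that $r(A)$ is the Rees congruence $\rho_\Sigma$ to conclude $A/\rho_\Sigma \in \mathbb{S}$.

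For sufficiency, given a pair satisfying (1)--(4), the natural move is to define $r$ by the formula already present in the excerpt,
\[
r(A) = \bigvee\{\rho \in \mathrm{Con}(A) \mid \rho \text{ is a Rees congruence and } \Sigma_\rho \subseteq \mathbb{R}\},
\]
and then verify that $r$ is a well-defined Kurosh-Amitsur radical with $\mathbb{R}_r = \mathbb{R}$ and $\mathbb{S}_r = \mathbb{S}$. I would first check that this join is itself a Rees congruence whose system of nontrivial cosets lies in $\mathbb{R}$ (using that $\mathbb{R}$ is homomorphically closed, since a union of overlapping $\mathbb{R}$-subacts maps onto, and is thus a homomorphic image controlling, the larger subact); this gives axiom (i) and the maximality needed for axiom (iii). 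Axiom (ii) follows because each coset $B$ in $\Sigma_{r(A)}$ lies in $\mathbb{R}$, hence $r(B) = \nabla_B$. Functoriality of $r$ reduces to checking that a homomorphism carries $\mathbb{R}$-cosets into $\mathbb{R}$-cosets, again via homomorphic closure of $\mathbb{R}$. The identity $r(A/r(A)) = \Delta$ and the semisimplicity $A/r(A) \in \mathbb{S}$ are where condition (4) does its work: condition (4) supplies at least one $\mathbb{R}$-system with semisimple Rees factor, and I would argue that the maximal such system is exactly $\Sigma_{r(A)}$, using condition (1) to rule out that any nontrivial $\mathbb{R}$-subact survives in the semisimple quotient.

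The main obstacle I anticipate is showing that the join defining $r(A)$ remains a Rees congruence with all nontrivial cosets in $\mathbb{R}$, and simultaneously that the resulting quotient $A/r(A)$ actually lands in $\mathbb{S}$. The difficulty is that joining Rees congruences can merge several disjoint $\mathbb{R}$-subacts into a single larger coset, and one must argue this larger coset is still in $\mathbb{R}$; this is precisely where homomorphic closure of $\mathbb{R}$ (condition (2)) is essential, since the larger coset is a quotient-compatible amalgam of the smaller ones. Establishing that the quotient is semisimple then requires combining the maximality of $\Sigma_{r(A)}$ with condition (4): condition (4) guarantees \emph{some} $\mathbb{R}$-system with semisimple factor exists, and I would show maximality forces $\Sigma_{r(A)}$ to dominate it, so that by condition (3) (subact-closure of $\mathbb{S}$) and condition (1) the factor $A/r(A)$ has no nontrivial $\mathbb{R}$-subacts and hence lies in $\mathbb{S}$.
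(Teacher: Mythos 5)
Your necessity direction is fine and substantively matches the paper (the paper simply cites Proposition 2.3, Theorem 2.4 and Theorem 2.6 of Wiegandt, whereas you verify the four conditions directly; both work). The sufficiency direction, however, contains a genuine gap, and it sits exactly at the step you yourself flag as the main obstacle. You claim that when the join defining $r(A)$ merges several overlapping $\mathbb{R}$-subacts into one larger coset, homomorphic closure (condition (2)) shows the merged coset is still in $\mathbb{R}$, ``since the larger coset is a quotient-compatible amalgam of the smaller ones.'' This does not work: if $B_1, B_2\in\mathbb{R}$ and $B_1\cap B_2\neq\emptyset$, the union $B_1\cup B_2$ is not a homomorphic image of $B_1$, of $B_2$, or of any act you know to lie in $\mathbb{R}$ (and the paper's abstract explicitly warns that in \textbf{S-Act} these classes need not be closed under coproducts, so you cannot factor through $B_1\sqcup B_2$ either). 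The standard repair is: $(B_1\cup B_2)/\rho_{B_1}$ \emph{is} a homomorphic image of $B_2$ (surjectivity uses $B_1\cap B_2\neq\emptyset$), hence lies in $\mathbb{R}$ by (2), and one then needs closedness of $\mathbb{R}$ under Rees extensions to conclude $B_1\cup B_2\in\mathbb{R}$ --- but Rees-extension closure is not among your hypotheses (1)--(4); it is precisely the nontrivial property that must be \emph{derived} from them. That derivation is the heart of the paper's proof: the paper shows (1)--(4) imply the inductive property and Rees-extension closure of $\mathbb{R}$ (its parts $(b)$ and $(c)$), and then invokes Wiegandt's Theorem 2.4, Corollary 2.5 and Theorem 2.6 rather than constructing $r$ by hand as you propose.

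The engine of the paper's derivation --- and the cleanest fix for your construction --- is a dominance argument that you deploy only at the very end and for a different purpose. Given any $\mathbb{R}$-system $\Sigma'$ of $A$ and the system $\Sigma$ from condition (4) with $A/\rho_{\Sigma}\in\mathbb{S}$, each $B\in\Sigma'$ yields $B/(\rho_{\Sigma}\wedge\nabla_B)$, a subact of $A/\rho_{\Sigma}$, hence in $\mathbb{S}$ by (3), in $\mathbb{R}$ by (2), hence trivial by (1); so $\nabla_B\leq\rho_{\Sigma}$ and $\Sigma'\leq\Sigma$. Thus $\rho_{\Sigma}$ is the \emph{maximum} Rees congruence with blocks in $\mathbb{R}$, the join collapses to $\rho_{\Sigma}$ with no block-merging analysis needed, and $A/r(A)=A/\rho_{\Sigma}\in\mathbb{S}$ directly from (4). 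Note also that your closing inference ``$A/r(A)$ has no nontrivial $\mathbb{R}$-subacts and hence lies in $\mathbb{S}$'' is circular as stated: the characterization of $\mathbb{S}$ as the class of acts without nontrivial $\mathbb{R}$-subacts (Wiegandt's Theorem 2.4) is available only after one knows $(\mathbb{R},\mathbb{S})$ arises from a Kurosh--Amitsur radical, which is exactly what is being proved; the dominance argument together with (4) should be used instead.
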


\begin{proof}
To prove necessity, let $r$ be a Kurosh-Amitsur radical. Then
\[\mathbb{R}_{r}=\{A\ |\ A\text{ has no non-trivial homomorphic image in } \mathbb{S}_{r}\}\]
by Proposition 2.3 of \cite{Wiegandt}, and
\[\mathbb{S}_{r}=\{A\ | \ B\subseteq A \text{ and } B\in \mathbb{R} \text{ imply } |B|\leq 1\}\]
by Theorem 2.4 of \cite{Wiegandt}. So $(\mathbb{R}_{r},\mathbb{S}_{r})$ satisfies the conditions (1-4), by Theorem 2.6 of \cite{Wiegandt}.

 To prove sufficiency, first we show that $\mathbb{R}$ is a radical class of a Kurosh-Amitsur radical such as $r$. To do so, it is enough to prove that:
\begin{enumerate}
\item[($a$)]$\mathbb{R}$ is homomorphically closed,
\item[($b$)]$\mathbb{R}$ has the inductive property, and
\item[($c$)]$\mathbb{R}$ is closed under Rees extensions,
\end{enumerate}
then Theorem 2.4 of \cite{Wiegandt}gives the result.

Part $(a)$ is established by the second property of the hypothesis.
To prove part $(b)$ given on ascending chain $\{A_{i}\}_{i\in I}$, consider  the associated $\mathbb{R}$-$system$  with $\bigcup_{i\in I} A_{i}$ as $\Sigma$. Then we have $\bigcup_{i\in I} A_{i}/\rho_{_{\Sigma}}\in \mathbb{S}$, by Condition(4) of the hypothesis. Now, since $A_{i}/(\rho _{_{\Sigma}}\wedge \nabla_{A_{i}})$ is a subact of $\bigcup_{i\in I} A_{i}/\rho_{_{\Sigma}}$, Condition(3) implies that $A_{i}/(\rho _{_{\Sigma}}\wedge \nabla_{A_{i}})\in\mathbb{S}$, for every $i\in I$. Also, by Condition(2) of the hypothesis, $A_{i}\in \mathbb{R}$ implies that $A_{i}/(\rho_{_{\Sigma}}\wedge \nabla_{A_{i}})\in \mathbb{R}$, for every $i\in I$. Hence $A_{i}/(\rho_{_{\Sigma}}\wedge \nabla_{A_{i}})\in \mathbb{R}\cap \mathbb{S}$ and  Condition(1) of the hypothesis indicates that $A_{i}/(\rho_{_{\Sigma}}\wedge \nabla_{A_{i}})$ is a trivial $S$-act. Therefore, $\nabla_{A_{i}} \leq \rho_{_{\Sigma}}$ for all $i\in I$. Thus there exists $B\in \Sigma$ such that $A_{i}\leq B$, for all $i\in I$. Now since $\{A_{i}\}_{i\in I}$ is an ascending chain and $\Sigma $ is a system of disjoint subacts of the $S$-act $\bigcup _{i\in I}A_{i}$, we have $\bigcup _{i\in I} A_{i}\leq B\leq \bigcup_{i\in I}A_{i}$. Therefore, $\bigcup _{i\in I}A_{i}=B$ and this means that $\bigcup_{i\in I}A_{i}\in \mathbb{R}$.

 To prove $(c)$ we show that for every $S$-act $A$ and every Rees congruence $\chi$ on $A$ with $\Sigma_{\chi}\subseteq \mathbb{R}$ and $A/\chi\in \mathbb{R}$, $A\in \mathbb{R}$. To do so, it is enough to show that, for the associated $\mathbb{R}$-$system$ with $A$ such as $\Sigma$, $A/\rho_{\Sigma}$ is a trivial $S$-act. Because,  $A/\rho_{\Sigma}$ being a  trivial $S$-act implies that $\rho_{\Sigma}$ is the total relation on $A$ and this means that $\Sigma =\{A\}$. But $\Sigma$ is an $\mathbb{R}$-$system$, so $A\in \mathbb{R}$. To prove that $A/\rho_{\Sigma}$ is trivial, we prove $A/\rho_{\Sigma}\in \mathbb{S}\cap \mathbb{R}$. But $A/\rho_{\Sigma}\in \mathbb{S}$, by Condition(4) of the hypothesis. Now we claim that $A/\rho_{\Sigma}\in \mathbb{R}$. Since $B/(\rho_{\Sigma}\wedge \nabla_{B})$ is a subact of $A/\rho_{\Sigma}$, for every $B\in \Sigma_{\chi}$,  Condition(3) of the hypothesis indicates $B/(\rho_{\Sigma}\wedge \nabla_{B})\in \mathbb{S}$. We also have $B\in \Sigma_\chi$ and $\Sigma_{\chi}\subseteq \mathbb{R}$, so $B\in \mathbb{R}$. Hence $B/(\rho_{\Sigma}\wedge \nabla_{B})\in \mathbb{R}$ follows from Condition(2) of the hypothesis. Thus $B/(\rho_{\Sigma}\wedge \nabla_{B})\in \mathbb{R}\cap \mathbb{S}$ and Condition(1) implies that $B/(\rho_{\Sigma}\wedge \nabla_{B})$ is a trivial $S$-act. That is, $\nabla_{B}\leq \rho_{_{\Sigma}}$, for all $B\in \chi$, and this means $\Sigma_{\chi}\leq \Sigma$. Now, by considering the canonical epimorphism $\pi :A/\chi \longrightarrow A/\rho_{_{\Sigma}}$, we have $A/\rho_{_{\Sigma}}\in \mathbb{R}$ which means that $A/\rho_{_{\Sigma}}$ is a trivial $S$-act. So $A\in \mathbb{R}$ and this indicates the closedness of $\mathbb{R}$ under Rees extension.

 Now the desired result follows from Corollary 2.5 and Theorem 2.6 of \cite{Wiegandt}.
\end{proof}

 \begin{theorem}
 Torsion theories of acts are the same as pairs of corresponding radical and semisimple classes of Kurosh--Amitsur radicals.
\end{theorem}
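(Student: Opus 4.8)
The plan is to treat the statement as an equivalence of axiom systems. Under the identification $\mathbb{T}=\mathbb{R}$ and $\mathbb{F}=\mathbb{S}$, I will show that a pair $(\mathbb{T},\mathbb{F})$ is a torsion theory in the sense of Definition \ref{torsion theory} if and only if it satisfies the four conditions of Lemma \ref{pair kurosh-amitsur}; the latter then says exactly that $(\mathbb{R},\mathbb{S})$ is the radical/semisimple pair of a Kurosh--Amitsur radical, which yields the bijection. Throughout I read a \emph{zero homomorphism} $A\to B$ as a map constant at a zero of $B$, so that an injective or surjective zero homomorphism forces its source, respectively target, to be trivial; this single fact converts each orthogonality clause into a statement about triviality.

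The orthogonality bookkeeping supplies most of the conditions quickly, in both directions. For the four conditions from a torsion theory: $\mathbb{R}\cap\mathbb{S}$ is trivial because $\mathrm{id}_A$ would be a zero homomorphism for $A$ in the intersection; $\mathbb{R}$ is homomorphically closed because a zero homomorphism $g\circ f$ through a surjection $f$ transfers to $g$; and $\mathbb{S}$ is subact-closed because a homomorphism into a subact $C\le B$, composed with $C\hookrightarrow B$, is forced to be a zero homomorphism landing in $C$, whence Definition \ref{torsion theory}(3) applies. Conversely, for the torsion axioms from the four conditions: for $A\in\mathbb{R}$, $B\in\mathbb{S}$ the image of any $A\to B$ lies in $\mathbb{R}\cap\mathbb{S}$, hence is trivial; if $A$ is orthogonal to all of $\mathbb{S}$ then the quotient $A\to A/\rho_\Sigma\in\mathbb{S}$ from condition (4) is a surjective zero homomorphism and collapses, forcing $\Sigma=\{A\}$ and $A\in\mathbb{R}$; and if $B$ is orthogonal to all of $\mathbb{R}$ then each member of its $\mathbb{R}$-system would inject into $B$ as a non-trivial zero homomorphism, so that system is empty and $B=B/\rho_\Sigma\in\mathbb{S}$.

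The main obstacle is condition (4) of Lemma \ref{pair kurosh-amitsur}: for each $A$ one must produce a $\mathbb{T}$-system $\Sigma$ with $A/\rho_\Sigma\in\mathbb{F}$, i.e.\ a torsion decomposition. Here the naive recipe ``collect all torsion subacts'' fails, since in {\bf S-Act} a torsion class need not be closed under coproducts. So I first extract the two closure properties that \emph{do} follow from the torsion axioms, each by the same factoring trick. \emph{Inductive property:} for an ascending chain $A_i\in\mathbb{T}$ with union $U$ and any $g\colon U\to F$ with $F\in\mathbb{F}$, the restrictions $g|_{A_i}$ are zero homomorphisms whose constant values coincide by nesting, so $g$ is constant and $U\in\mathbb{T}$. \emph{Closure under Rees extensions:} if $E$ carries a Rees congruence $\rho$ with $\Sigma_\rho\subseteq\mathbb{T}$ and $E/\rho\in\mathbb{T}$, then any $g\colon E\to F$ restricts on each coset to a zero homomorphism, hence is constant on cosets and factors as $\bar g\circ\pi_\rho$; since $E/\rho\in\mathbb{T}$, $\bar g$ and therefore $g$ is a zero homomorphism, so $E\in\mathbb{T}$.

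With these two properties I will take $\Sigma$ to be a maximal $\mathbb{T}$-system, whose existence follows from Zorn's lemma once the inductive property is used to see that the join of a chain of $\mathbb{T}$-systems---whose new cosets are ascending unions of torsion subacts---is again a $\mathbb{T}$-system. Since Definition \ref{torsion theory}(1) and (3) make $A/\rho_\Sigma\in\mathbb{F}$ equivalent to $A/\rho_\Sigma$ having no non-trivial subact $D\in\mathbb{T}$, it suffices to exclude such a $D$. If one existed, the preimage $E=\pi^{-1}(D)$ would carry a Rees congruence with cosets in $\Sigma\subseteq\mathbb{T}$ and quotient $E/\rho\cong D\in\mathbb{T}$, so closure under Rees extensions would give $E\in\mathbb{T}$; absorbing into the strictly larger $E$ the members of $\Sigma$ that meet it would then produce a properly larger $\mathbb{T}$-system, contradicting maximality. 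This settles condition (4), and combining both directions with Lemma \ref{pair kurosh-amitsur} completes the proof.
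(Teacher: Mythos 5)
Your proposal is correct, and its overall frame agrees with the paper's: both handle the forward direction by verifying conditions (1)--(4) of Lemma \ref{pair kurosh-amitsur}, with condition (4) carried by a maximal ``torsion'' Rees congruence whose quotient is shown torsion-free via a coset-factoring argument. The differences are in execution, and they mostly favor you. Where the paper simply invokes ``the largest Rees congruence $t(A)$ whose non-singleton classes are in $\mathbb{T}$'' without justifying its existence --- which is not automatic in {\bf S-Act}, since a torsion class need not be closed under joining overlapping subacts --- you first derive the inductive property of $\mathbb{T}$ from the orthogonality axioms and then produce a maximal $\mathbb{T}$-system by Zorn's lemma, so you fill a genuine gap in the published argument. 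Likewise, the paper inlines its key step (showing $\ker(\pi)\cap\nabla_{\pi^{-1}(f(B))}\subseteq\ker(g)$ so that any $g$ into a torsion-free act factors through the quotient, whence $\pi^{-1}(f(B))\in\mathbb{T}$ and maximality of $t(A)$ collapses $f(B)$), whereas you isolate the same mechanism as a standalone closure of $\mathbb{T}$ under Rees extensions and then run the maximality contradiction on $E=\pi^{-1}(D)$ for a hypothetical non-trivial torsion subact $D$ of the quotient; the two arguments are identical in substance, yours being better structured, though you additionally rely on the (correct, and derived by you) homomorphic closedness of $\mathbb{T}$ to pass from ``no non-trivial torsion subact'' to torsion-freeness via axiom (3). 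For the converse, the paper verifies the three torsion axioms for $(\mathbb{R}_r,\mathbb{S}_r)$ by citing Wiegandt's external descriptions of the radical and semisimple classes (Proposition 2.3 and Theorem 2.4 of \cite{Wiegandt}), while you derive them directly and elementarily from conditions (1)--(4) of Lemma \ref{pair kurosh-amitsur} --- using that a singleton subact is automatically a zero, so constant maps are zero homomorphisms --- which makes your version self-contained; the only price is that your whole proof leans on Lemma \ref{pair kurosh-amitsur} in both directions, whereas the paper uses it only for necessity, but since that lemma is proved in the paper this is a legitimate economy rather than a defect.
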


\begin{proof}
 Let $(\mathbb{T},\mathbb{F})$ be a torsion theory. We show that it satisfies properties (1)--(4) of Lemma \ref{pair kurosh-amitsur}. 

Properties (1)--(3) follow immediately from the definition of a torsion theory. To prove property (4), take an arbitrary $S$-act $A$ and denote by $t(A)$ the largest Rees congruence over $A$ whose non-singleton classes are in $\mathbb{T}$. Denote by $\pi$ the canonical homomorphism $\pi :A\to A/t(A)$. Take an arbitrary $B\in \mathbb{T}$ and any homomorphism $f: B\to A/t(A)$. We show that $\pi^{-1}(f(B))\in \mathbb{T}$. Take an arbitrary $X\in \mathbb{F}$ and a homomorphism 
$$g: \pi^{-1}(f(B))\to X,$$ 
then $\ker (\pi)\cap \nabla_{\pi^{-1}(f(B))} \subseteq \ker (g)$ must hold. Indeed, if $x\neq y$ and 
$(x,y)\in \ker (\pi)\cap \nabla_{\pi^{-1}(f(B))}$ then, by $(x,y)\in \ker (\pi)$, there exists a $C\in \Sigma_{t(A)} \subseteq\mathbb{T}$ such that $(x,y)\in C$. Now $C\in\mathbb{T}$ and $X\in\mathbb{F}$, so $g|_C : C\to X$ is the trivial homomorphism, and thus 
$\ker (\pi)\cap \nabla_{\pi^{-1}(f(B))} \subseteq \ker (g)$. Therefore $\bar g : f(B)\to X\,:\, f(b)\mapsto g(\pi^{-1}(f(b)))$ is well defined and it is the trivial homomorphism, whence $g$ is also the trivial homomorphism. Thus $\pi^{-1}(f(B))\in \mathbb{T}$. So there exists a $C\in\Sigma_{t(A)}$ such that 
$$f(B)\leq C/(t(A)\cap\nabla_C) = \{[C]_t\},$$
 whence $|f(B)|\leq 1$ and therefore $A/t(A)\in\mathbb{F}$.

For the converse, we prove that the pair $(\mathbb{R}_r, \mathbb{S}_r)$ is a torsion theory, for every Kurosh--Amitsur radical $r$. To do so, we show that $(\mathbb{R}_{r},\mathbb{S}_{r})$ satisfies the conditions of Definition \ref{torsion theory}. Indeed, by Proposition 2.3 of \cite{Wiegandt},
\reqnomode
\begin{align}\label{eq1}\tag{I}
\mathbb{R}_{r} =\{A\ |\ A \text{ has no non-trivial homomorphic image in } \mathbb{S}\}.
\end{align}
So ${\rm Hom}(A, B)$ is empty or when $B$ has zeros, ${\rm Hom}(A, B)$ consists of the zero homomorphisms, for every $A\in \mathbb{R}$ and $B\in \mathbb{S}$. Also (\ref{eq1}) indicates that $A\in \mathbb{R}$ when ${\rm Hom}(A, B)$ is empty or consists of the zero homomorphisms, for every $B\in \mathbb{S}$. So $(\mathbb{R}_{r},\mathbb{S}_{r})$ satisfies the first and the second properties of Definition \ref{torsion theory}. To prove the third property of Definition \ref{torsion theory}, let $B$ be an $S$-act with no non-trivial homomorphism from $A$ to $B$, for every $A\in \mathbb{R}$. Then no subact of $B$ can  belong to $\mathbb{R}$. Now, since
\[\mathbb{S}_{r}=\{A\ |\ B\subseteq A \text{ and } B\in \mathbb{R} \text{ imply } |B|\leq 1\},\]
by Theorem 2.4 of \cite{Wiegandt}, $B\in \mathbb{S}$. That is $(\mathbb{R}_{r},\mathbb{S}_{r})$ satisfies the third property of Definition \ref{torsion theory} and hence $(\mathbb{R}_{r}, \mathbb{S}_{r})$ is a torsion theory.
\end{proof}

\begin{lemma}
Let $\mathbb{C}$ be a subclass of $S$-acts which is closed under taking subacts and products. Then the radical class of the  Hoehnke radical $\mathbb{R}_{r_{\mathbb{C}}}$ has the following properties.
\begin{enumerate}
\item The class $\mathbb{R}_{r_{\mathbb{C}}}$ is homomorphically closed.
\item The class $\mathbb{R}_{r_{\mathbb{C}}}$ has the inductive property.
\item The class $\mathbb{R}_{r_{\mathbb{C}}}$ is closed under Rees congruence extension.
\end{enumerate}
\end{lemma}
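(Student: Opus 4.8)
The plan is to reduce all three statements to a single reformulation of membership in $\mathbb{R}_{r_{\mathbb{C}}}$ and then read each closure property off it, using only that $\mathbb{C}$ is closed under subacts. First I would unwind the defining formula $r_{\mathbb{C}}(A)=\bigwedge\{\chi\in\mathrm{Con}(A)\mid A/\chi\in\mathbb{C}\}$. Since every trivial act belongs to $\mathbb{C}$, the total congruence $\nabla_A$ always occurs among the $\chi$ being met, so the meet runs over a nonempty family; and because $\nabla_A$ is the top of $\mathrm{Con}(A)$, this meet equals $\nabla_A$ precisely when no $\chi\neq\nabla_A$ satisfies $A/\chi\in\mathbb{C}$. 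Hence
\[ A\in\mathbb{R}_{r_{\mathbb{C}}}\iff A\text{ has no non-trivial homomorphic image lying in }\mathbb{C}, \]
which is the analogue of \eqref{eq1}. This reformulation is the workhorse for the whole lemma.

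Granting it, part $(1)$ is immediate: if $A\in\mathbb{R}_{r_{\mathbb{C}}}$ and $A\twoheadrightarrow B$, then any non-trivial homomorphic image of $B$ in $\mathbb{C}$ would, by composition, be a non-trivial homomorphic image of $A$ in $\mathbb{C}$, contradicting the reformulation. For part $(2)$ I would take an ascending chain $\{A_i\}_{i\in I}$ in $\mathbb{R}_{r_{\mathbb{C}}}$ with union $A$ and argue by contradiction from a surjection $f:A\to C$ onto a non-trivial $C\in\mathbb{C}$. Each image $f(A_i)$ is a subact of $C$, hence lies in $\mathbb{C}$ by closure under subacts, and since $A_i\in\mathbb{R}_{r_{\mathbb{C}}}$ this forces $|f(A_i)|\leq 1$. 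As the chain is ascending the singletons $f(A_i)$ are nested, so $C=\bigcup_{i}f(A_i)$ is itself a single point, contradicting $|C|\geq 2$.

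Part $(3)$ follows the same template but exploits factorisation through a quotient. Given a Rees congruence $\chi$ on $A$ with $\Sigma_\chi\subseteq\mathbb{R}_{r_{\mathbb{C}}}$ and $A/\chi\in\mathbb{R}_{r_{\mathbb{C}}}$, I would again suppose $f:A\to C$ surjects onto a non-trivial $C\in\mathbb{C}$. For each non-singleton class $B\in\Sigma_\chi$ the image $f(B)$ is a subact of $C$, so $f(B)\in\mathbb{C}$; as $B\in\mathbb{R}_{r_{\mathbb{C}}}$ this gives $|f(B)|\leq 1$, so $f$ is constant on every $\chi$-class, i.e. $\chi\subseteq\ker f$. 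Thus $f$ factors as $\bar f\circ\pi$ with $\pi:A\to A/\chi$, and $\bar f$ surjects $A/\chi$ onto the non-trivial $C\in\mathbb{C}$, contradicting $A/\chi\in\mathbb{R}_{r_{\mathbb{C}}}$.

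The main obstacle is conceptual rather than computational: noticing that closure of $\mathbb{C}$ under subacts is exactly what transports the ``no non-trivial quotient in $\mathbb{C}$'' property from subacts and Rees cosets up to the ambient act. I would pay the most care to the meet-equals-$\nabla_A$ step underlying the reformulation and, in part $(2)$, to the point that an ascending union of singleton images is again a singleton; everything else is routine once the reformulation is in hand.
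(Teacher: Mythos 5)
Your proof is correct, and its skeleton is the same as the paper's: everything is driven by the identification of $\mathbb{R}_{r_{\mathbb{C}}}$ with the class of acts having no non-trivial homomorphic image in $\mathbb{C}$, together with the observation that subact-closure of $\mathbb{C}$ forces images of radical subacts to be trivial. The differences are worth noting, though. The paper only invokes the inclusion $\mathbb{R}_{r_{\mathbb{C}}}\subseteq \mathcal{R}\mathbb{C}$ (citing it without proof), while you prove the full equivalence by the one-line lattice argument that a meet equals the top element $\nabla_A$ iff every member of the (nonempty) family does --- a correct and clean justification. More substantively, in part (2) the paper works with the canonical quotient $\bigcup_i A_i/r_{\mathbb{C}}(\bigcup_i A_i)$ and needs to know this quotient lies in $\mathbb{C}$, which uses closure of $\mathbb{C}$ under \emph{products} (via the fact that $\mathbb{C}$ is the semisimple class of $r_{\mathbb{C}}$); similarly its part (3) starts from ``$A/r_{\mathbb{C}}(A)\in\mathbb{C}$ is non-trivial'' and derives $\rho\leq r_{\mathbb{C}}(A)$ from functoriality of the radical. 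You instead argue contrapositively against an \emph{arbitrary} surjection $f\colon A\to C$ with $C\in\mathbb{C}$ non-trivial, so your proofs of all three parts use only subact-closure of $\mathbb{C}$ and never the product hypothesis; your derivation of $\chi\subseteq\ker f$ in (3) from triviality of each $f(B)$, $B\in\Sigma_\chi$, is the same fact the paper obtains as $\rho\leq r_{\mathbb{C}}(A)$, just localized at $f$. So your route is marginally more elementary and slightly more general (the three closure properties hold for any subact-closed $\mathbb{C}$), at the cost of not exhibiting the structural facts --- $A/r_{\mathbb{C}}(A)\in\mathbb{C}$ and monotonicity of $r_{\mathbb{C}}$ on subacts --- that the paper reuses later, e.g.\ in Proposition \ref{prop}.
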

\begin{proof}
(1) With the definition of a radical class in mind, this assertion is clear.

 (2) To prove the inductive property, consider an ascending chain $\{A\}_{i}$ in $\mathbb{R}_{r_{\mathbb{C}}}$. Then we have the canonical homomorphism
\[ \pi_{i}:A_{i}\longrightarrow \frac{\bigcup _{i\in I}A_{i}}{r_{\mathbb{C}}(\bigcup_{i\in I}A_{i})}\]
for every $i\in I$. We should note that $ \frac{\bigcup _{i\in I}A_{i}}{r_{\mathbb{C}}(\bigcup_{i\in I}A_{i})} \in \mathbb{C}$, since the semisimple class of $r_{\mathbb{C}}$ is exactly $\mathbb{C}$. Also $\pi_{i}(A_{i})\leq \frac{\bigcup _{i\in I}A_{i}}{r_{\mathbb{C}}(\bigcup_{i\in I}A_{i})}$, and so $\pi_{i}(A_{i})\in \mathbb{C}$. Now, since $A_{i}\in \mathbb{R}_{r_{\mathbb{C}}}\subseteq \mathcal{R}\mathbb{C}$, where
\[\mathcal{R}\mathbb{C}=\{A\ | \ A\text{ has no non-trivial homomorphic image in }\mathbb{C}\}, \]
we have that $\pi_{i}(A_{i})$ is a trivial $S$-act. Hence $\pi_{i}(x)=\pi_{i}(y)$, for every $x, y\in A_{i}$. So the canonical homomorphism
\[\pi:\bigcup _{i\in I}A_{i}\longrightarrow \frac{\bigcup_{i\in I}A_{i}}{r_{\mathbb{C}}(\bigcup_{i\in I}A_{i})}\]
maps every $x, y\in \bigcup_{i\in I}A_{i}$ to the same element in $\frac{\bigcup_{i\in I}A_{i}}{r_{\mathbb{C}}(\bigcup_{i\in I}A_{i})}$.
Indeed, since $(A_{i})_{i\in I}$ is an ascending chain, for every $x, y\in \bigcup_{i\in I} A_{i}$, there exists $j\in I$ such that $x, y\in A_{j}$ and $\pi(x)=\pi_{j}(x)=\pi_{j}(y)=\pi(y)$. But the canonical homomorphism $\pi$ is onto, so $\frac{\bigcup_{i\in I}A_{i}}{r_{\mathbb{C}}(\bigcup_{i\in I}A_{i})}$ is a trivial $S$-act. That is, $r_{\mathbb{C}}(\bigcup_{i\in I}A_{i})=\nabla_{\bigcup_{i\in I}A_{i}}$. Hence $\bigcup_{i\in I}A_{i}\in \mathbb{R}_{r_{\mathbb{C}}}$.

 (3) Now we show that $\mathbb{R}_{r_{\mathbb{C}}}$ is closed under Rees extension. Let $\rho$ be a Rees congruence with $A/\rho\in \mathbb{R}_{r_{\mathbb{C}}}$ and $\Sigma_{\rho}\subseteq \mathbb{R}_{r_{\mathbb{C}}}$. We show that $A\in \mathbb{R}_{r_{\mathbb{C}}}$. \\
Otherwise, $A/r_{\mathbb{C}}(A)\in \mathbb{C}$ is a non-trivial $S$-act. But, since for every $B\leq A$, $r_{\mathbb{C}}(B)\leq r_{\mathbb{C}}(A)\wedge \nabla_{B}$, we have $\rho\leq r_{\mathbb{C}}(A)$. Hence we get the epimorphism
\[\begin{matrix}
A/\rho &\longrightarrow &A/r_{\mathbb{C}}(A)\\
a/\rho &\mapsto & a/r_{\mathbb{C}}(A).
\end{matrix} \]
That is, $A/\rho \in \mathbb{R}_{r_{\mathbb{C}}}$ has a non-trivial homomorphic image in $\mathbb{C}$, which is a contradiction. So $A\in \mathbb{R}_{r_{\mathbb{C}}}$.
\end{proof}

\begin{remark}\label{r1}
By the above lemma, given a subclass $\mathbb{S}$ of $S$-acts which is closed under taking subacts and products, we get the  Hoehnke radical $r_\mathbb{S}$ whose radical class $\mathbb{R}_{r_{\mathbb{S}}}$ has the desired property of Theorem 2.4 in \cite{Wiegandt}. Hence $\mathbb{R}_{r_{\mathbb{S}}}$, by that theorem, gives a Kurosh-Amitsur radical $r_{k}$. Let us denote the semisimple class of $r_{k}$ by $\mathbb{S}_{r_{k}}$. So we can consider the assignment $(-)_{k}$, mapping every  Hoehnke radical $r$ to $r_{k}$. It worth noting that $(-)_{k}$ is order preserving. Indeed, if $r$ and $r'$ are two  Hoehnke radicals with $r\leq r'$ then we have
\[\begin{aligned}
& r\leq r' &
\Rightarrow & \left\{
\begin{matrix}
\mathbb{S}_{r'}\leq\mathbb{S}_{r}&\\
\mathbb{R}_{r}\leq\mathbb{R}_{r'}&
\end{matrix}\right.\\
& & \Rightarrow &\ r_{k}\leq r'_{k}.
\end{aligned}
\]
\end{remark}

 Now we show that the above Kurosh-Amitsur radical $r_{k}$ associated with a  Hoehnke radical $r_{h}$ enjoys the following properties.

\begin{proposition}\label{prop}
Let $r_{h}$ be a  Hoehnke radical and $r_{k}$ be the Kurosh-Amitsur radical determined by the radical class $\mathbb{R}_{r_{h}}$ of $r_{h}$. then
\begin{enumerate}
\item $r_{k}=\bigvee\{r\ | \ r \text{ is a Kurosh-Amitsur radical with } r\leq r_{h}\},$
\item $r_{k}=\bigwedge \{r \ | \ \mathbb{R}_{r}=\mathbb{R}_{r_{h}}, \text { and } r \text{ is a  Hoehnke radical}\}.$
\end{enumerate}
\end{proposition}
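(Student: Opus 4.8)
The plan is to prove both equalities by exhibiting $r_{k}$ as, respectively, the \emph{greatest} element of the first set and the \emph{least} element of the second. Since a supremum (resp.\ infimum) that is attained by an actual maximum (resp.\ minimum) of the set coincides with that element, this route lets me avoid checking that a pointwise join or meet of radicals is again a radical, which is the one genuinely delicate point one would otherwise have to confront.

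I would open with two preliminary observations. First, by the construction preceding the proposition (the lemma immediately preceding Remark \ref{r1}, applied to $\mathbb{S}=\mathbb{S}_{r_{h}}$), $r_{k}$ is a genuine Kurosh--Amitsur radical with $\mathbb{R}_{r_{k}}=\mathbb{R}_{r_{h}}$: the inclusion $\mathbb{R}_{r_{h}}\subseteq\mathbb{R}_{r_{k}}$ follows because $A\in\mathbb{R}_{r_h}$ forces every quotient $A/\chi\in\mathbb{S}_{r_h}$ to be trivial, by Definition \ref{Hoehnke}(i) applied to $A\to A/\chi$; the reverse inclusion holds because $r_{\mathbb{S}_{r_h}}(A)=\nabla_A$ forces $r_h(A)=\nabla_A$, using $A/r_h(A)\in\mathbb{S}_{r_h}$ from Definition \ref{Hoehnke}(ii). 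Being Kurosh--Amitsur, $r_{k}$ obeys the radical formula recalled from \cite{Wiegandt}:
\[
r_{k}(A)=\bigvee\{\rho\in\mathrm{Con}(A)\mid \rho \text{ is a Rees congruence and } \Sigma_{\rho}\subseteq\mathbb{R}_{r_{h}}\}.
\]
Second, I would isolate a functoriality remark valid for \emph{every} Hoehnke radical $r$: if $\rho$ is a Rees congruence on $A$ with $\Sigma_{\rho}\subseteq\mathbb{R}_{r}$, then $\rho\leq r(A)$. Indeed, for each $B\in\Sigma_{\rho}$ we have $r(B)=\nabla_{B}$, and Definition \ref{Hoehnke}(i) applied to the inclusion $B\hookrightarrow A$ gives $\nabla_{B}=r(B)\leq r(A)$ once $\nabla_{B}$ is read as the Rees congruence $\rho_{B}\in\mathrm{Con}(A)$; taking the join over $B$ and using $\rho=\rho_{\Sigma_{\rho}}=\bigvee_{B}\rho_{B}$ yields $\rho\leq r(A)$.

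For part (1) I would apply this functoriality remark with $r=r_{h}$ to every Rees congruence in the defining join of $r_{k}(A)$, obtaining $r_{k}(A)\leq r_{h}(A)$ for all $A$, hence $r_{k}\leq r_{h}$; as $r_{k}$ is Kurosh--Amitsur it lies in the set on the right. To see it dominates that set, let $r$ be a Kurosh--Amitsur radical with $r\leq r_{h}$. The order-preserving passage of Remark \ref{r1} gives $\mathbb{R}_{r}\subseteq\mathbb{R}_{r_{h}}=\mathbb{R}_{r_{k}}$, and since $r$ too is Kurosh--Amitsur the radical formula applies to it; comparing the two joins over the now nested families of Rees congruences gives $r(A)\leq r_{k}(A)$ for all $A$, i.e.\ $r\leq r_{k}$. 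Thus $r_{k}$ is the maximum, hence the join, of $\{r\mid r \text{ Kurosh--Amitsur},\ r\leq r_{h}\}$. For part (2) I proceed dually: $r_{k}$ lies in the set on the right because it is a Hoehnke radical with $\mathbb{R}_{r_{k}}=\mathbb{R}_{r_{h}}$, and for any Hoehnke radical $r$ with $\mathbb{R}_{r}=\mathbb{R}_{r_{h}}$ every Rees congruence $\rho$ in the defining join of $r_{k}(A)$ has $\Sigma_{\rho}\subseteq\mathbb{R}_{r_{h}}=\mathbb{R}_{r}$, so the functoriality remark gives $\rho\leq r(A)$ and therefore $r_{k}(A)\leq r(A)$, i.e.\ $r_{k}\leq r$. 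Hence $r_{k}$ is the minimum, hence the meet, of $\{r\mid \mathbb{R}_{r}=\mathbb{R}_{r_{h}},\ r \text{ Hoehnke}\}$.

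The main thing to get right, and the only place where the two parts genuinely diverge, is \emph{which} radicals the Rees-congruence formula may be invoked for: for $r_{k}$ and for the competitors in part (1) it is available because they are Kurosh--Amitsur, whereas the arbitrary competitors in part (2) are only assumed to be Hoehnke, so there I must rely solely on the functoriality remark, which holds for all Hoehnke radicals. The only other step needing care is the identification $\mathbb{R}_{r_{k}}=\mathbb{R}_{r_{h}}$ together with the validity of the radical formula for $r_{k}$, both resting on the lemma preceding Remark \ref{r1} and the cited results of \cite{Wiegandt}; everything else is the elementary lattice bookkeeping of comparing suprema over nested families of Rees congruences.
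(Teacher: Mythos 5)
Your proof is correct, and its overall strategy is the paper's: exhibit $r_{k}$ as the \emph{maximum} of the first set and the \emph{minimum} of the second, with functoriality of Hoehnke radicals applied to inclusions $B\hookrightarrow A$ of radical subacts (giving $\nabla_B = r(B)\leq r(A)$) as the engine of all three comparisons. The genuine difference is in the bookkeeping: you work throughout at the level of congruences, invoking Wiegandt's join formula $r(A)=\bigvee\{\rho\mid\rho\ \text{Rees},\ \Sigma_{\rho}\subseteq\mathbb{R}_{r}\}$ for the Kurosh--Amitsur radicals involved, whereas the paper works at the level of systems of subacts, proving the coset-wise comparisons $\Sigma_{r_{k}(A)}\leq\Sigma_{r_{h}(A)}$ and $\Sigma_{r_{k}(A)}\leq\Sigma_{r(A)}$. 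Because $r_{h}(A)$ need not be a Rees congruence, the paper's route needs a preliminary construction --- the greatest subact $C$ with $\nabla_{C}\subseteq r_{h}(A)$, shown to be saturated under $r_{h}(A)$, hence an element of $\Sigma_{r_{h}(A)}$ --- to locate each $B\in\Sigma_{r_{k}(A)}$ inside some coset $C$; and for the dominance step in (1) the paper invokes axiom (iii) of Kurosh--Amitsur radicals applied to the $\mathbb{R}_{r_{k}}$-system $\Sigma_{r(A)}$, where you instead compare the joins over nested families of Rees congruences ($\mathbb{R}_{r}\subseteq\mathbb{R}_{r_{k}}$ gives a larger family, hence a larger join). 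Your lattice-theoretic shortcut bypasses both the preliminary claim and axiom (iii), a modest but real simplification; what it foregoes is the finer structural statement about how cosets of $r_{k}(A)$ sit inside cosets of $r_{h}(A)$, which the paper's argument yields along the way. Your explicit verification that $\mathbb{R}_{r_{k}}=\mathbb{R}_{r_{h}}$ is also sound (and slightly more careful than the paper, which takes this identification for granted from Remark \ref{r1} and Theorem 2.4 of \cite{Wiegandt}).
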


 \begin{proof}
First we note that for each $B\in \Sigma_{r_{k}(A)}$, there exists a subact $C$ of $A$ containing $B$ with $C\in \Sigma_{r_{h}(A)}$. Indeed, for each $B\in \Sigma_{r_{k}(A)} $, we take $C$ to be the greatest subact of $A$ with $\nabla_{C}\subseteq r_{h}(A)$ and show that $c/r_{h}(A)\subseteq C$, for every $c\in C$. For, otherwise, if there exists $a\in (c/r_{h}(A))\setminus C$, then $(sa, sc)\in r_{h}(A)$, for every $s\in S$. Since $sc\in C$ and $\nabla_{C}\subseteq r_{h}(A)$, transitivity of $r_{h}$ implies that $sa\in c/r_{h}(A)$, for every $s\in S$. So we have $C\subsetneq Sa\cup C$ and $\nabla_{Sa\cup C}\subseteq r_{h}(A)$, which contradict the choice of $C$.

 (1) To prove, first we show that $r_{k}(A)\leq r_{h}(A)$, for every $S$-act $A$. Indeed, for any $S$-act $A$ and $B\in \Sigma_{r_{k}(A)}$, we have $r_{k}(B)=\nabla_{B}$. Hence $B\in\mathbb{R}_{r_{k}}=\mathbb{R}_{r_{h}}$ and therefore, the definition of  Hoehnke radical implies that $\nabla_{B}=r_{h}(B)\leq r_{h}(A)$ and hence $\nabla_{B}=r_{h}(A)\wedge \nabla_{B}$. Then, there exists $C\in \Sigma_{r_{h}(A)}$ such that $B\leq C$. So $\Sigma_{r_{k}(A)}\leq \Sigma_{r_{h}(A)}$, which implies that $r_{k}\leq r_{h}$.\\
Now, we show that if $r(A)\leq r_{h}(A)$ then $r(A)\leq r_{k}(A)$, for each Kurosh-Amitsur radical $r$. Indeed, for a given Kurosh-Amitsur radical $r$, if $r\leq r_{h}$, then $\mathbb{R}_{r}\leq \mathbb{R}_{r_{h}} =\mathbb{R}_{r_{k}}$. Because, for any $S$-act $A$ and $B\in \Sigma_{r(A)}$, we have $r(B)=\nabla_{B}$, $B\in \mathbb{R}_{r}\subseteq \mathbb{R}_{r_{h}}=\mathbb{R}_{r_{k}}$. So there exists $C\in \Sigma _{r_{k}(A)}$ such that $B\subseteq C$ and this implies that $r(A)\leq r_{k}(A)$.

 (2) To show this part, first we show that $r_{k}(A)\leq r(A)$, for every  Hoehnke radical $r$ with $\mathbb{R}_{r}=\mathbb{R}_{r_{h}}$ and every $S$-act $A$. To do so, let $A$ be an $S$-act and $B\in \Sigma_{r_{k}(A)}$. Then, $r_{k}(B)=\nabla_{B}\in \mathbb{R}_{r_{k}} = \mathbb{R}_{r}$. Now, since $r$ is a  Hoehnke radical, we have $\nabla_{B}=r(B)\leq r(A)\wedge \nabla_{B} $ and hence $\nabla_{B}\leq r(A)$. Thus, there exists $C$ in $\Sigma _{r(A)}$ such that $B\subseteq C$. That is, $r_{k}(A)\leq r(A)$, for every  Hoehnke radical $r$ with $\mathbb{R}_{r}=\mathbb{R}_{r_{h}}$ and every $S$-act $A$. Also, since $r_{k}\in \{r\ | \ \mathbb{R}_{r}=\mathbb{R}_{r_{h}}, \text{ and } r \text{ is a  Hoehnke radical}\}$, $r_{k}$ is the greatest among the lower bounds of 
 \[\{r\ | \ \mathbb{R}_{r}=\mathbb{R}_{r_{h}}, \text { and } r \text { is a  Hoehnke radical} \}.\]
\end{proof}

 Now, by the above proposition we show that in the defintion of a Kurosh-Amitsur radical the third property can be omitted. See the following corollary.

 \begin{corollary}
In the definition of the Kurosh-Amitsur radicals, the properties {\rm({i})} and  {\rm(ii)} imply the third one.
\end{corollary}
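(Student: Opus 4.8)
The plan is to show that a Hoehnke radical $r$ satisfying (i) and (ii) must coincide with the Kurosh--Amitsur radical $r_k$ attached to its own radical class, and then to read off (iii) from the fact that $r_k$ is genuinely Kurosh--Amitsur. Concretely, I take $r_h=r$ in Proposition~\ref{prop} and let $r_k$ be the Kurosh--Amitsur radical determined by $\mathbb{R}_{r_h}=\mathbb{R}_r$, so that $\mathbb{R}_{r_k}=\mathbb{R}_r$.

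First I would get $r_k\leq r$ for free: since $r$ is itself a Hoehnke radical with $\mathbb{R}_r=\mathbb{R}_{r_h}$, it lies in the family over which the meet in Proposition~\ref{prop}(2) is formed, whence $r_k(A)\leq r(A)$ for every $S$-act $A$.

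The substance is the reverse inequality, and this is exactly where (i) and (ii) enter. By (i) the congruence $r(A)$ is Rees, hence determined by its system $\Sigma_{r(A)}$ of non-singleton blocks, each of which is a subact. By (ii) every $B\in\Sigma_{r(A)}$ satisfies $r(B)=\nabla_B$, i.e. $B\in\mathbb{R}_r=\mathbb{R}_{r_k}$; thus $\Sigma_{r(A)}$ is an $\mathbb{R}_{r_k}$-system of disjoint non-trivial subacts of $A$. Applying property (iii) of the honest Kurosh--Amitsur radical $r_k$ to this system gives $\Sigma_{r(A)}\leq\Sigma_{r_k(A)}$, which under the order-isomorphism between Rees congruences and systems of disjoint subacts is precisely $r(A)\leq r_k(A)$. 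Combining the two inequalities yields $r(A)=r_k(A)$ for all $A$; hence $r=r_k$ is Kurosh--Amitsur and in particular satisfies (iii).

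The main obstacle I anticipate is essentially bookkeeping: making the passage between ``$\Sigma_{r(A)}\leq\Sigma_{r_k(A)}$'' and ``$r(A)\leq r_k(A)$'' rigorous, which rests on the order-preserving bijection between Rees congruences and systems of disjoint subacts and on the fact that a non-singleton class of a Rees congruence is automatically a subact. It is worth remarking that one can also argue (iii) directly, bypassing $r_k$: given any $\mathbb{R}_r$-system $\Sigma$ and $B\in\Sigma$, functoriality of $r$ along the inclusion $B\hookrightarrow A$ yields $\nabla_B=r(B)\subseteq r(A)$, and then (i) forces all of $B$ into a single non-singleton block $C\in\Sigma_{r(A)}$, so that $\Sigma\leq\Sigma_{r(A)}$, which is exactly (iii).
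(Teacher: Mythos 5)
Your main argument is correct and follows essentially the same route as the paper's proof: both compare $r$ with the Kurosh--Amitsur radical $r_k$ attached to $\mathbb{R}_r$ via Remark \ref{r1}, obtain one inequality from Proposition \ref{prop}, obtain the other by applying property (iii) of the honest Kurosh--Amitsur radical $r_k$ to the system $\Sigma_{r(A)}$, and conclude $r=r_k$. Your writeup is in fact tighter than the paper's on two points. First, the paper asserts ``$r\leq r_k$'' and then ``$\Sigma_{r_k(A)}\leq \Sigma_{r(A)}$,'' which are mutually inconsistent directions; the correct reading, which you give, is $r_k\leq r$ from Proposition \ref{prop}(2), since $r$ itself belongs to the family over which that meet is taken. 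Second, the paper leaves tacit the verification---which you spell out---that (i) and (ii) make $\Sigma_{r(A)}$ an $\mathbb{R}_{r_k}$-system, which is the only place those hypotheses enter. Your closing remark is a genuine improvement over the paper: the direct argument ($B\in\Sigma$ gives $r(B)=\nabla_B$ by the definition of $\mathbb{R}_r$, functoriality along $B\hookrightarrow A$ gives $\nabla_B\subseteq r(A)$, and (i) then places $B$ inside a single non-singleton block $C\in\Sigma_{r(A)}$) proves (iii) using only property (i) and the normality (functoriality) of the Hoehnke radical, bypassing Remark \ref{r1} and Proposition \ref{prop} entirely---and indeed bypassing (ii), which this route shows is needed only to conclude that such an $r$ is Kurosh--Amitsur, not for (iii) itself. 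So the direct argument is both shorter and slightly stronger than the stated corollary.
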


 \begin{proof}
To prove, we show that the class $\overline {\mathcal{H}}$ of those  Hoehnke radicals which satisfy Properties {\rm({i})} and {\rm(ii)} is exactly the class of Kurosh-Amitsur radicals. But, it is clear that the class of Kurosh-Amitsur radicals is a subclass of  $\overline {\mathcal{H}}$. Now we show that every radical in $\overline{ \mathcal{H}}$ is a Kurosh-Amitsur radical. Indeed, for a given radical $r\in \overline{ \mathcal{H}}$, we consider the associated Kurosh-Amitsur radical $r_{k}$ with $r$, see Remark (\ref{r1}), with $\mathbb{R}_{r}=\mathbb{R}_{r_{k}}$. Then Proposition \ref{prop} implies $r\leq r_{k}$.
Thus $\Sigma_{r_{k}(A)}\leq \Sigma_{r(A)}$, for every $A\in~$\textbf{S-Act}. But since $r_k$ is a Kurosh-Amitsur radical, by the third property of the definition of Kurosh-Amitsur radical, we have $\Sigma_{r(A)}\leq \Sigma_{r_{k}(A)}$. Therefore $\Sigma_{r_{k}(A)}= \Sigma_{r(A)}$. Hence $r=r_{k}$ and we are done.
\end{proof}

 \begin{corollary}
The class of Kurosh-Amitsur radicals as a poset is a reflective subcategory of the class of  Hoehnke radicals.
\end{corollary}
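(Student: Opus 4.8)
The plan is to realize the assignment $(-)_{k}$ of Remark~\ref{r1} as the adjoint of the inclusion $\iota\colon \mathcal{K}\hookrightarrow \mathcal{H}$ of the poset $\mathcal{K}$ of Kurosh--Amitsur radicals into the poset $\mathcal{H}$ of Hoehnke radicals, both regarded as thin categories. Since a poset viewed as a category has at most one morphism between any two objects, all the usual data of an adjunction (unit, counit, naturality, the triangle identities) collapse to a single universal inequality. Thus the whole corollary reduces to producing, for each Hoehnke radical $r_{h}$, a Kurosh--Amitsur radical $r_{k}=(r_{h})_{k}$ together with the comparison $r_{k}\leq r_{h}$, and to checking that $r_{k}$ is extremal among the Kurosh--Amitsur radicals comparable with $r_{h}$.

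First I would record that $(-)_{k}\colon \mathcal{H}\to\mathcal{K}$ is a functor between these thin categories: it is a well-defined assignment by Remark~\ref{r1}, and it is order preserving by the computation given there, so it carries each inequality $r\leq r'$ to $r_{k}\leq r'_{k}$, i.e.\ morphisms to morphisms. Next I would invoke the preceding Corollary, which shows that a Hoehnke radical satisfying properties {\rm(i)} and {\rm(ii)} is already Kurosh--Amitsur; equivalently, every $r\in\mathcal{K}$ satisfies $r=r_{k}$. Hence $(-)_{k}$ restricts to the identity on $\mathcal{K}$, so $\iota\circ(-)_{k}$ is an idempotent endofunctor of $\mathcal{H}$ whose fixed points are exactly the objects of $\mathcal{K}$; and $\iota$ is full, as is automatic for a subposet inclusion.

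The decisive ingredient is Proposition~\ref{prop}(1). It asserts both that $r_{k}\leq r_{h}$, which furnishes the comparison morphism demanded by the adjunction, and that every Kurosh--Amitsur radical $r$ with $r\leq r_{h}$ already satisfies $r\leq r_{k}$, which is the universal factorization property. Together these say that $r_{k}$ is the extremal Kurosh--Amitsur radical attached to $r_{h}$, and in the thin-category setting this single universal inequality is exactly the adjunction between $(-)_{k}$ and $\iota$. This realizes $\mathcal{K}$ as the reflective subcategory of $\mathcal{H}$ asserted by the corollary, with $(-)_{k}$ as reflector.

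I expect the only genuine obstacle to lie in the extremality claim underlying the universal property, namely that $r_{k}$ is the best Kurosh--Amitsur approximation of $r_{h}$ rather than merely one comparable radical. That claim, however, is precisely the content of Proposition~\ref{prop}(1), which is already in hand. Consequently, once the functoriality of $(-)_{k}$ and the fixed-point identity $r=r_{k}$ for $r\in\mathcal{K}$ are assembled, the corollary follows, with naturality of the comparison automatic from thinness.
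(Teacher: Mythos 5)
Your proposal is correct and follows essentially the same route as the paper: it treats $\mathcal{H}_{S}$ and $\mathcal{K}_{S}$ as poset categories, uses the order-preservation of $(-)_{k}$ from Remark~\ref{r1} for functoriality, and extracts the universal inequality (the comparison $r_{k}\leq r_{h}$ together with extremality among Kurosh--Amitsur radicals below $r_{h}$) from Proposition~\ref{prop}(1), which is exactly what the paper does, merely citing Lemma 1.3.1 of the Tholen--Dikranjan reference instead of unwinding the adjunction by hand as you do. The only difference is expository: you make the thinness collapse of unit, counit, and triangle identities explicit, whereas the paper delegates this to the cited lemma.
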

\begin{proof}
We denote the class of the  Hoehnke radicals of {\bf S-Act} by $\mathcal{H}_{S}$ and the class of Kurosh-Amitsur radicals of {\bf S-Act} by $\mathcal{K}_{S}$. Then, since $\mathcal{H}_{S}$ forms a poset  with the  order
\[ r\leq r'\Leftrightarrow r(A)\leq r'(A),\ \text{ for every } A\in\text{{\bf S-Act}},\]
 one can consider $\mathcal{H}_{S}$ as a category. Obviously $\mathcal{K}_{S}$ is a full subcategory of $\mathcal{H}_{S}$. Also, since the map $(-)_{k}$, mapping every  Hoehnke radical $r$ to the Kurosh-Amitsur radical $r_{k}$, see Remark \ref{r1}, is order preserving, it can be regarded as a functor form $\mathcal{H}_{S}$ to $\mathcal{K}_{S}$. Now, Proposition 3.5 (1) indicates that $\mathcal{K}_{S}$ is a reflective full subcategory of $\mathcal{H}_{S}$, see Lemma 1.3.1 from \cite{Tholen95}.
\end{proof}

 \medskip

\end{document}